\newtheorem{theorem}{Theorem}
\newtheorem{remark}[theorem]{Remark}
\newtheorem{lemma}[theorem]{Lemma}
\newtheorem{proposition}[theorem]{Proposition}
\newtheorem{corollary}[theorem]{Corollary}
\newtheorem{conjecture}[theorem]{Conjecture}
\numberwithin{equation}{section}
\newcommand{\C}{\mathbb C}
\newcommand{\N}{\mathbb{N}}
\newcommand{\Z}{\mathbb{Z}}
\def\mg{\mathfrak{g}}
\def\ma{\mathfrak{a}}
\def\mh{\mathfrak{h}}
\def\s{\mathfrak{s}}
\def\sl{\mathfrak{sl}}
\def\gl{\mathfrak{gl}}
\newcommand{\cd}{\mathcal{D}}
\title[On the universal enveloping algebra of $\mathfrak{gl}_{n}$]{\bf An algebra isomorphism on
$U(\mathfrak{gl}_n)$}
\author{Yang Li and  Genqiang Liu}
\date{\today}
\begin{document}

\begin{abstract} For each  positive integer $n$,  let  $\mathfrak{s}_n=\mathfrak{gl}_n\ltimes \mathbb{C}^n$.
We show that $U(\mathfrak{s}_{n})_{X_{n}}\cong \mathcal{D}_{n}\otimes U(\mathfrak{s}_{n-1})$ for any  $n\in\mathbb{Z}_{\geq 2}$, where $U(\mathfrak{s}_{n})_{X_{n}}$ is the
localization of  $U(\mathfrak{s}_{n})$ with respect to the subset $X_n:=\{e_1^{i_1}\cdots e_{n}^{i_{n}}\mid i_1,\dots,i_{n}\in \mathbb{Z}_+\}$, and $\mathcal{D}_{n}$ is the Weyl algebra
$\mathbb{C}[x_1^{\pm 1}, \cdots, x_{n}^{\pm 1}, \frac{\partial}{\partial x_1},\cdots, \frac{\partial}{\partial x_{n}}]$.
As an application, we give a new proof of the Gelfand-Kirillov conjecture for $\mathfrak{s}_n$ and $\mathfrak{gl}_n$. Moreover we show that  the category of Harish-Chandra $U(\mathfrak{s}_{n})_{X_n}$-modules with a fixed weight support is
equivalent to the category  of finite dimensional $\mathfrak{s}_{n-1}$-modules whose representation type is wild,  for any $n\in \mathbb{Z}_{\geq 2}$.
\end{abstract}
\vspace{5mm}
\maketitle

\noindent{{\bf Keywords:}  Weyl algebra, localization, Gelfand-Kirillov conjecture,
Harish-Chandra module, wild.}
\vspace{2mm}

\noindent{{\bf Math. Subj. Class.} 2010: 17B05, 17B10, 17B30, 17B35}

\section{introduction}
Weyl type algebras or  differential operator algebras  play an important role in the structure theory and representation theory of Lie algebras.
The famous  Gelfand-Kirillov conjecture (see \cite{GK}) states that the skew fraction field of the universal enveloping algebra of an algebraic Lie algebra over an algebraically closed field $\mathbb{K}$ is isomorphic to the skew field of fractions of some Weyl algebra over a purely transcendental extension of $\mathbb{K}$. The conjecture was settled by Gelfand and Kirillov \cite{GK} for nilpotent Lie algebras, and for $\gl_n$ and $\sl_n$. For recent results on
Gelfand-Kirillov  conjecture, we can see \cite{P}. In the  representation theory, an important application of Weyl algebras is the  Beilinson-Bernstein localization Theorem, see \cite{BB}.

Many important  finite dimensional Lie algebras in  mathematical physics are not semi-simple. Unlike complex semi-simple Lie algebras, the  theory of non semi-simple Lie algebras is still not well developed. In the last decade, there  were several important progresses on the representation of non semi-simple Lie algebras, see
\cite{BL, Du, DLMZ,LMZ, MM} and references therein.

For each  positive integer $n$, let  $\mathfrak{s}_n=\mathfrak{gl}_n\ltimes \mathbb{C}^n$ which is isomorphic to the maximal parabolic subalgebra
$\mathfrak{p}_n=\{(a_{ij})\in \sl_{n+1}\mid a_{n+1,k}=0,   1\leq k\leq n\}$ of $\sl_{n+1}$. The algebra $\mathfrak{p}_n$ is important for constructing
cuspidal $\sl_{n+1}$-modules, see \cite{GS}. On the other hand, the skew fraction field  of $U(\gl_{n+1})$ is generated by the skew fraction field of $U(\s_n)$  and  its $n+1$ central elements. These facts motivate us to study
$U(\mathfrak{s}_n)$ and its representations.

We use the Weyl algebra $\cd_n$  to study $\s_n$.  We show that there is an isomorphism between  $U(\mathfrak{s}_{n})_{X_{n}}$ and $\mathcal{D}_{n}\otimes U(\mathfrak{s}_{n-1})$ for any  $n\in\Z_{\geq 2}$, see Theorem \ref{tensor-iso}.
This isomorphism makes it possible to study $\s_n$ by induction on $n$.
As an application, we give a new proof of the Gelfand-Kirillov conjecture for $\mathfrak{s}_n$ and $\mathfrak{gl}_n$, see Theorem \ref{h-th} and Theorem
\ref{gk-th}. We can also reduce some infinite dimensional $\s_n$-modules to finite dimensional $\s_{n-1}$-modules, see Proposition \ref{mod-p}.

In this paper, we denote by $\Z$, $\N$, $\Z_+$ and $\C$ the sets of integers, positive integers, nonnegative
integers and complex numbers, respectively. All vector spaces and algebras are over $\C$. For a Lie algebra
$\mathfrak{g}$ we denote by $U(\mathfrak{g})$ its universal enveloping algebra. We write $\otimes$ for
$\otimes_{\mathbb{C}}$.

\section{Preliminaries}


 For each  $n\in \N$, let $\gl_{n}$ be the general linear Lie algebra over $\C$,  i.e., $\gl_{n}$  is consisting of $n\times n$-complex matrices.
 Let $e_{ij}$   denote the $n\times n $-matrix  unit whose $(i,j)$-entry is $1$ and $0$ elsewhere, $1\leq i,j \leq n$. Then $\{e_{ij}\mid 1\leq i,j \leq n\}$ is a basis of $\gl_{n}$.

  Let $A_{n}= \C [x_1^{\pm 1}, \dots, x_{n}^{\pm 1}]$ be the Laurent polynomial algebra in $n$ variables.
Then the subalgebra of $\text{End}_{\C}(A_{n})$  generated by $\{x_i,x_i^{-1},\frac{\partial}{\partial x_i}\mid 1\leq i\leq n\}  $ is called the Weyl algebra $\cd_{n}$ over $A_{n}$.
It is clear that $\cd_n\cong \cd_1\otimes \cdots \otimes \cd_1$. It is well known that there is an algebra homomorphism $\phi$ from $U(\gl_n)$ to $\cd_n$ such that $\phi(e_{ij})=x_i\frac{\partial}{\partial x_j}$, $1\leq i,j \leq n$.

Let  $\mathfrak{s}_n=\mathfrak{gl}_n\ltimes \mathbb{C}^n$ be the semidirect product of the general linear Lie algebra $\mathfrak{gl}_n$ and its natural representation $\mathbb{C}^n$. When $n=1$, $\s_1$ is the two dimensional Lie algebra $\C d_0\oplus \C d_1$ such that $[d_0,d_1]=d_1$. Let $\{e_1,\cdots,e_{n}\}$ be the standard basis of $\mathbb{C}^{n}$.

Let  $\mathfrak{h}_{n}=\oplus_{i=1}^{n}\C e_{ii} $ which is a Cartan subalgebra of $\s_n$.   An $\s_n$-module  $M$ is called a {\it weight module} if $\mathfrak{h}_{n}$ acts diagonally on  $M$, i.e.
$$ M=\oplus_{\lambda\in \mathfrak{h}_{n}^*} M_\lambda,$$
where $M_\lambda:=\{v\in M \mid hv=\lambda(h) v, \ \forall \ h\in \mh_n\}.$   Denote $$\mathrm{Supp}(M):=\{\lambda\in \mathfrak{h}_{n}^* \mid M_\lambda\neq0\}.$$
For a weight $\lambda\in \mathfrak{h}_{n}^*$, we  identify $\lambda$ with the
$n$-tuple $\lambda=(\lambda_1,\dots,\lambda_n)$, where $\lambda_i=\lambda(e_{ii})$. Conversely any vector $\mu=(\mu_1,\dots,\mu_n)\in \Z^n$ can be viewed as a
weight such that $\mu(e_{ii})=\mu_i$, for any $i$.
A nonzero element $v\in M_\lambda$ is called a weight vector.
A weight $\s_n$-module $M$ is called a Harish-Chandra module if all its weight spaces are finite dimensional.

\section{The tensor product decomposition of $U(\mathfrak{s}_{n})_{X_{n}}$}

In this section, we will study the structure of $U(\s_n)$ using the Weyl algebra $\cd_n$ and the localization technique.

\subsection{The  isomorphism theorem}

For every $n\in \Z_{\geq 2}$, let $\ma_{n-1}$ be the subalgebra of $\gl_{n}$ spanned by
$$e_{ij}-e_{jj}, \ \ 1\leq i,j\leq n.$$

\begin{lemma}\label{s-lemma}For every $n\in \Z_{\geq 2}$, the  linear map $ \xi: \s_{n-1}\rightarrow  \ma_{n-1}$ such that
\begin{equation}\aligned &e_{ij} \mapsto  \tilde{e}_{ij}:=e_{ij}-e_{nj},\\
e_i &\mapsto \tilde{e}_i:=\sum_{l=1}^n(e_{il}-e_{nl}) ,  \ 1\leq i,j\leq n-1,
\endaligned\end{equation}  is a Lie algebra isomorphism.
\end{lemma}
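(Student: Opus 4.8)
The plan is to check that $\xi$ sends the defining relations of $\s_{n-1}$ to identities valid in $\gl_n$, and then that $\xi$ is bijective onto $\ma_{n-1}$. Recall that $\s_{n-1}=\gl_{n-1}\ltimes\C^{n-1}$ has basis $\{e_{ij}\mid 1\le i,j\le n-1\}\cup\{e_i\mid 1\le i\le n-1\}$ with brackets $[e_{ij},e_{kl}]=\delta_{jk}e_{il}-\delta_{li}e_{kj}$, $[e_{ij},e_k]=\delta_{jk}e_i$, and $[e_i,e_j]=0$. So it suffices to verify, inside $\gl_n$ using $[e_{ab},e_{cd}]=\delta_{bc}e_{ad}-\delta_{da}e_{cb}$, the three families
\[
[\tilde e_{ij},\tilde e_{kl}]=\delta_{jk}\tilde e_{il}-\delta_{li}\tilde e_{kj},\qquad [\tilde e_{ij},\tilde e_{k}]=\delta_{jk}\tilde e_{i},\qquad [\tilde e_{i},\tilde e_{j}]=0 ,
\]
for $1\le i,j,k,l\le n-1$. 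In the first family every index lies in $\{1,\dots,n-1\}$, so every Kronecker delta containing the index $n$ drops out, and the four terms of $[e_{ij}-e_{nj},\,e_{kl}-e_{nl}]$ reassemble into $\delta_{jk}(e_{il}-e_{nl})-\delta_{li}(e_{kj}-e_{nj})$ after a one-line computation.

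The two families involving $\tilde e_i=\sum_{l=1}^n(e_{il}-e_{nl})$ are the only place where bookkeeping is needed, since $\tilde e_i$ genuinely runs over the full index set, $n$ included. Expanding $[\tilde e_{ij},\tilde e_k]$ and $[\tilde e_i,\tilde e_j]$ and using $\sum_{l=1}^n\delta_{li}=\sum_{l=1}^n\delta_{lj}=1$ (valid because $i,j\le n-1\le n$), the double sums telescope: in $[\tilde e_{ij},\tilde e_k]$ everything cancels except $\delta_{jk}\sum_{l=1}^n(e_{il}-e_{nl})=\delta_{jk}\tilde e_i$, and in $[\tilde e_i,\tilde e_j]$ the four resulting single sums cancel in pairs, giving $0$. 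This is the main (and only mildly tedious) obstacle; everything else is bookkeeping-free.

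For injectivity, suppose $\sum_{i,j=1}^{n-1}a_{ij}\tilde e_{ij}+\sum_{i=1}^{n-1}b_i\tilde e_i=0$ in $\gl_n$. No $\tilde e_{ij}$ involves a matrix unit with second index $n$, and in $\tilde e_i$ the only such units are $e_{in}$ (coefficient $1$) and $e_{nn}$ (coefficient $-1$); hence the coefficient of $e_{in}$ (for fixed $i<n$) is exactly $b_i$, forcing $b_i=0$, and then the coefficient of $e_{ij}$ ($i,j<n$) is $a_{ij}$, forcing $a_{ij}=0$. So $\xi$ is injective. Each $\tilde e_{ij}=(e_{ij}-e_{jj})-(e_{nj}-e_{jj})$ and each $\tilde e_i=\sum_{l=1}^n\big((e_{il}-e_{ll})-(e_{nl}-e_{ll})\big)$ lies in $\ma_{n-1}$, so $\xi(\s_{n-1})\subseteq\ma_{n-1}$; and $\dim\s_{n-1}=(n-1)^2+(n-1)=n(n-1)=\dim\ma_{n-1}$, the latter because the spanning set $\{e_{ij}-e_{jj}\mid i\ne j\}$ of $\ma_{n-1}$ is linearly independent (its off-diagonal parts are distinct matrix units). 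Hence $\xi$ is a linear isomorphism onto $\ma_{n-1}$, and being a Lie algebra homomorphism, it is a Lie algebra isomorphism.

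\textbf{A conceptual shortcut.} One can sidestep most of the computation as follows: $\xi$ is the standard realization $\theta\colon\s_{n-1}\hookrightarrow\gl_n$ onto the parabolic $\mathfrak{q}=\{A\in\gl_n\mid \text{last row of }A=0\}$ (with $e_{ij}\mapsto e_{ij}$ and $e_i\mapsto e_{in}$) composed with the inner automorphism $\mathrm{Ad}(g)$ of $\gl_n$, where $g^{-1}=\begin{pmatrix}I_{n-1}&0\\ \mathbf{1}^{\top}&1\end{pmatrix}$. Since the last row of $g^{-1}$ is $\mathbf{1}^{\top}=(1,\dots,1)$, conjugation by $g$ carries $\{A:\text{last row}=0\}$ onto $\{A:\text{every column sums to }0\}$, which is exactly $\ma_{n-1}$; a short matrix multiplication then confirms $\mathrm{Ad}(g)(e_{ij})=e_{ij}-e_{nj}$ and $\mathrm{Ad}(g)(e_{in})=\sum_{l=1}^n(e_{il}-e_{nl})$, i.e. $\mathrm{Ad}(g)\circ\theta=\xi$. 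From this viewpoint the homomorphism property of $\xi$, the closure of $\ma_{n-1}$ under bracket, and the bijectivity are all automatic, and only the routine identities $\mathrm{Ad}(g)\circ\theta=\xi$ on the basis remain to be checked.
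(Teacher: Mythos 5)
Your main argument is correct and follows essentially the same route as the paper: a direct verification of the three bracket families inside $\gl_n$ plus a dimension argument for bijectivity (the paper asserts surjectivity together with $\dim\s_{n-1}=\dim\ma_{n-1}$, while you check injectivity and that the image lies in $\ma_{n-1}$ --- equivalent bookkeeping). Your conceptual shortcut, realizing $\xi$ as $\mathrm{Ad}(g)$ composed with the standard embedding onto the parabolic with vanishing last row, is also correct and is a nice observation not present in the paper, since it makes the homomorphism property and bijectivity automatic.
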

\begin{proof}   Since $\dim \s_{n-1}=\dim \ma_{n-1}$ and $\xi$ is surjective,  $\xi$ is bijective.
For any $ i,j,k,l\in \{1,\dots, n-1\}$, we have
$$\aligned\ [\tilde{e}_{ij},\tilde{e}_{kl}]&= [e_{ij}-e_{nj},e_{kl}-e_{nl}]\\
&=\delta_{jk}(e_{il}-e_{n,l})-\delta_{li}(e_{kj}-e_{nj})\\
&= \delta_{jk}\tilde{e}_{il}-\delta_{li}\tilde{e}_{kj},
\endaligned$$
$$\aligned\ [\tilde{e}_i,\tilde{e}_j]&=
 [\sum_{l=1}^n(e_{il}-e_{nl}),\sum_{k=1}^n(e_{jk}-e_{nk})]\\
&= 0,
\endaligned$$
and
$$\aligned\ [\tilde{e}_{ij},\tilde{e}_{k}]&= [e_{ij}-e_{nj},\sum_{l=1}^n(e_{kl}-e_{nl})]\\
&=\delta_{jk}\sum_{l=1}^n(e_{il}-e_{nl})\\
&= \delta_{jk}\tilde{e}_{i}.
\endaligned$$
Therefore $\xi$ is a Lie algebra isomorphism.
\end{proof}
 
 In $U(\s_{n})$, every $e_k$ is a locally $\text{ad}$-nilpotent element.  So  the subset $X_n:=\{e_1^{i_1}\cdots e_{n}^{i_{n}}\mid i_1,\dots,i_{n}\in \mathbb{Z}_+\}$ is an ore subset of $U(\s_{n})$, see Lemma 4.2  in \cite{M}.  We use $U(\mathfrak{s}_{n})_{X_{n}}$  to denote the localization of  $U(\mathfrak{s}_{n})$ with respect to the subset $X_n$.

\begin{proposition}\label{s-iso} For any  $n\in \Z_{\geq 2}$, the linear map $\psi$ defined by
\begin{equation}\aligned U(\mathfrak{s}_{n})_{X_{n}} & \rightarrow \cd_{n}\otimes U(\ma_{n-1}),\\
e_{ij}& \mapsto x_i\frac{\partial}{\partial x_j}\otimes 1 + x_ix_j^{-1}\otimes (e_{ij}-e_{jj}),\\
e_i&\mapsto x_i\otimes 1,\\
e_i^{-1}&\mapsto x^{-1}_i\otimes 1,\ \  1\leq i\leq n,
\endaligned \end{equation}  is an associative algebra isomorphism.
\end{proposition}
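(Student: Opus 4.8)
The plan is to prove the proposition in two stages. First I would check that $\psi$ extends to a well-defined algebra homomorphism, using the universal properties of $U(\s_n)$ and of the Ore localization; then I would establish bijectivity by comparing associated graded algebras for a suitable filtration. For the first stage: since $\psi(e_i)=x_i\otimes 1$ is a unit in $\cd_n\otimes U(\ma_{n-1})$ with inverse $x_i^{-1}\otimes 1$, the universal property of the localization reduces the task to producing an algebra homomorphism $U(\s_n)\to\cd_n\otimes U(\ma_{n-1})$ sending $e_{ij}\mapsto x_i\frac{\partial}{\partial x_j}\otimes 1+x_ix_j^{-1}\otimes(e_{ij}-e_{jj})$ and $e_k\mapsto x_k\otimes 1$ (then $\psi(e_i^{-1})=x_i^{-1}\otimes 1$ is forced), and the universal property of $U(\s_n)$ reduces this to checking the bracket relations of $\s_n$ for these images. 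Writing $E_{ij}=x_i\frac{\partial}{\partial x_j}\otimes 1$, $t_{ij}=x_ix_j^{-1}\otimes 1$, $\bar e_{ij}=1\otimes(e_{ij}-e_{jj})$, so that $\psi(e_{ij})=E_{ij}+t_{ij}\bar e_{ij}$ with $\bar e_{ii}=0$, the relations $[e_i,e_j]=0$ and $[e_{ij},e_k]=\delta_{jk}e_i$ are immediate, because $t_{ij}$ and $\bar e_{kl}$ commute with $x_k\otimes 1$ and $[x_i\frac{\partial}{\partial x_j},x_k]=\delta_{jk}x_i$.

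The one substantial relation is $[e_{ij},e_{kl}]=\delta_{jk}e_{il}-\delta_{li}e_{kj}$. I would expand
\[ [\psi(e_{ij}),\psi(e_{kl})]=[E_{ij},E_{kl}]+[E_{ij},t_{kl}]\,\bar e_{kl}-[E_{kl},t_{ij}]\,\bar e_{ij}+t_{ij}t_{kl}\,[\bar e_{ij},\bar e_{kl}],\]
using that the $E$'s satisfy the $\gl_n$-relations, the $\bar e$'s satisfy the $\ma_{n-1}$-relations, the two families commute, the $t$'s commute with every $x_m$ and every $\bar e_{pq}$, and $[E_{ab},t_{cd}]=\delta_{bc}t_{ad}-\delta_{bd}t_{ad}t_{cd}$. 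Substituting the bracket identity $[\bar e_{ij},\bar e_{kl}]=\delta_{jk}(\bar e_{il}-\bar e_{jl})-\delta_{li}(\bar e_{kj}-\bar e_{lj})+\delta_{jl}(\bar e_{kj}-\bar e_{ij})$ (a short direct computation in $\gl_n$) and simplifying Laurent monomials by $t_{ij}t_{jl}=t_{il}$ and $t_{ij}t_{ki}=t_{kj}$, the extra $\bar e$-terms should cancel in pairs, leaving $\delta_{jk}(E_{il}+t_{il}\bar e_{il})-\delta_{li}(E_{kj}+t_{kj}\bar e_{kj})=\delta_{jk}\psi(e_{il})-\delta_{li}\psi(e_{kj})$, as needed. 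This completes the first stage.

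For bijectivity, I would filter $U(\s_n)_{X_n}$ by putting $A'_n:=\C[e_1^{\pm1},\dots,e_n^{\pm1}]=(U(\C^n))_{X_n}$ in degree $0$ and $\gl_n$ in degree $\le 1$. This is an algebra filtration: $[\gl_n,\gl_n]\subseteq\gl_n$, and $\gl_n$ acts on $A'_n$ by derivations, since the action $e_{ij}\cdot e_k=\delta_{jk}e_i$ on $\C[e_1,\dots,e_n]$ extends uniquely to the localization; by the PBW theorem its associated graded is the polynomial ring over $A'_n$ on the $n^2$ symbols of the $e_{ij}$. I would filter $\cd_n\otimes U(\ma_{n-1})$ analogously, with $A_n=\C[x_1^{\pm1},\dots,x_n^{\pm1}]$ in degree $0$ and the $\frac{\partial}{\partial x_i}$ together with $\ma_{n-1}$ in degree $\le 1$; its associated graded is the polynomial ring over $A_n$ on $n+\dim\ma_{n-1}=n+(n^2-n)=n^2$ generators, using $\dim\ma_{n-1}=\dim\s_{n-1}=n^2-n$ from Lemma \ref{s-lemma}. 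Then $\psi$ is filtered, and after identifying $A'_n$ with $A_n$ via $e_i\mapsto x_i$, the induced $A_n$-linear map on degree-$1$ parts sends the symbol of $e_{ii}$ to $x_i\frac{\partial}{\partial x_i}$ and the symbol of $e_{ij}$ ($i\ne j$) to $x_i\frac{\partial}{\partial x_j}+x_ix_j^{-1}\bar e_{ij}$. Ordering the source basis as the symbols of the $e_{ij}$ with $i\ne j$ followed by the symbols of the $e_{ii}$, and the target basis as the $\bar e_{ij}$ with $i\ne j$ followed by the $\frac{\partial}{\partial x_i}$, its matrix over $A_n$ is block lower triangular with invertible diagonal blocks $\mathrm{diag}(x_ix_j^{-1})_{i\ne j}$ and $\mathrm{diag}(x_i)_i$, hence invertible over $A_n$. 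Therefore $\mathrm{gr}\,\psi$ is an $A_n$-algebra map between polynomial rings over $A_n$ carrying a free generating set onto a free generating set, so it is an isomorphism; since both filtrations are exhaustive and concentrated in nonnegative degrees, it follows that $\psi$ is an isomorphism.

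I expect the main obstacle to be the relation $[\psi(e_{ij}),\psi(e_{kl})]=\delta_{jk}\psi(e_{il})-\delta_{li}\psi(e_{kj})$: each commutator is elementary, but there are many indices to track, and the vanishing of the extra terms genuinely uses both the explicit $\ma_{n-1}$-bracket and the monomial identities $t_{ij}t_{jl}=t_{il}$, $t_{ij}t_{ki}=t_{kj}$; splitting the expansion into an ``$E$-part'' and an ``$\bar e$-part'' is what should keep the bookkeeping under control. A smaller point is to make sure the filtration on $U(\s_n)_{X_n}$ is well defined, i.e. that $\mathrm{ad}\,\gl_n$ preserves degree $0$ on the $e_i^{-1}$ as well, which is exactly the statement that the derivation action of $\gl_n$ on $\C[e_1,\dots,e_n]$ extends to the localization $A'_n$.
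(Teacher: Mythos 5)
Your proposal is correct, and the homomorphism half is essentially the paper's argument: like the authors, you reduce to checking the $\mathfrak{s}_n$-bracket relations on the images (the universal properties of $U(\s_n)$ and of the Ore localization that you invoke are exactly what the paper uses implicitly), and the expansion you set up with $E_{ij}$, $t_{ij}$, $\bar e_{ij}$, together with your identities $[E_{ab},t_{cd}]=\delta_{bc}t_{ad}-\delta_{bd}t_{ad}t_{cd}$, $[\bar e_{ij},\bar e_{kl}]=\delta_{jk}(\bar e_{il}-\bar e_{jl})-\delta_{li}(\bar e_{kj}-\bar e_{lj})+\delta_{jl}(\bar e_{kj}-\bar e_{ij})$ and $t_{ij}t_{jl}=t_{il}$, $t_{ij}t_{ki}=t_{kj}$, does make the extra terms cancel exactly as you predict, so the part you leave as ``should cancel'' is not a gap. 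Where you genuinely diverge is bijectivity: the paper simply exhibits the inverse homomorphism $\psi'$ with $e_{ij}-e_{jj}\mapsto e_je_i^{-1}e_{ij}-e_{jj}$, $x_i^{\pm1}\mapsto e_i^{\pm1}$ (and, implicitly, $\frac{\partial}{\partial x_i}\mapsto e_i^{-1}e_{ii}$, a generator the paper does not even list), whereas you prove bijectivity by a filtered/associated-graded argument, placing $\C[e_1^{\pm1},\dots,e_n^{\pm1}]$ resp.\ $A_n$ in degree $0$ and showing $\mathrm{gr}\,\psi$ is a linear change of degree-one variables with block-triangular invertible matrix over $A_n$. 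The paper's route is shorter and yields explicit formulas for the inverse (useful later when transporting modules), but it asks the reader to guess and verify $\psi'$; your route needs a bit more infrastructure --- that $\mathrm{ad}\,\gl_n$ preserves the localized degree-zero part (which you note), that the associated graded of the localization is the polynomial ring $A_n'\otimes S(\gl_n)$ (a PBW-plus-Ore fact worth one more line), and the standard passage from ``$\mathrm{gr}\,\psi$ bijective'' to ``$\psi$ bijective'' for exhaustive nonnegative filtrations --- but in exchange it requires no guess of the inverse and makes the structural reason for bijectivity (a unit-triangular change of generators over $A_n$) transparent.
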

\begin{proof} For any $u\in \cd_{n}$ and $v\in U(\ma_{n-1})$,  we also use $u$ and  $v$ to denote the elements $u\otimes 1$ and $1\otimes v$  in $\cd_{n}\otimes U(\ma_{n-1})$ respectively. From
 $$\aligned &\psi([e_{ij},e_{kl}])=\psi(\delta_{jk}e_{il}-\delta_{li}e_{kj})\\
 &=\delta_{jk}x_i\frac{\partial}{\partial x_l}\otimes 1 +\delta_{jk} x_ix_l^{-1}\otimes (e_{il}-e_{ll})\\
 &\ \ \ \  -\delta_{li}x_k\frac{\partial}{\partial x_j}\otimes 1 -\delta_{li} x_kx_j^{-1}\otimes (e_{kj}-e_{jj})\\
 &= [\psi(e_{ij}),\psi(e_{kl})]
 \endaligned$$
 and $\psi([e_{ij},e_k])=[\psi(e_{ij}),\psi(e_k)]$,
 we can see the $\psi$ preserves the Lie bracket relations of $\s_{n}$. So $\psi$ is an algebra homomorphism.
 It can be also checked directly that the  algebra homomorphism
$$\aligned \psi': \cd_{n}\otimes U(\ma_{n-1}) &
 \rightarrow U(\mathfrak{s}_{n})_{X_{n}} ,\\
e_{ij}-e_{jj} & \mapsto e_je_i^{-1}e_{ij}-e_{jj},\\
x_i&\mapsto e_i,\\
x_i^{-1}&\mapsto e^{-1}_i,\ \  1\leq i\leq n,
\endaligned $$
is the inverse of $\psi$.
 Therefore $\psi$ is an isomorphism.
\end{proof}

Combining Lemma \ref{s-lemma} with Proposition \ref{s-iso}, we have the following tensor product decomposition of $U(\mathfrak{s}_{n})_{X_{n}}$.

\begin{theorem}\label{tensor-iso} We have the associative algebra isomorphism $$ U(\mathfrak{s}_{n})_{X_{n}}\cong \cd_{n}\otimes U(\s_{n-1}),$$ for any $n\in \Z_{\geq 2}$.
\end{theorem}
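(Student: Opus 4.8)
The plan is to obtain Theorem \ref{tensor-iso} simply by composing the two isomorphisms that have just been established, so that essentially no new work is required.

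First I would invoke Proposition \ref{s-iso}, which gives the associative algebra isomorphism
\[
\psi: U(\mathfrak{s}_{n})_{X_{n}} \xrightarrow{\ \sim\ } \cd_{n}\otimes U(\ma_{n-1}).
\]
Next I would invoke Lemma \ref{s-lemma}, which provides the Lie algebra isomorphism $\xi: \s_{n-1}\to \ma_{n-1}$. Passing to universal enveloping algebras, $\xi$ induces an associative algebra isomorphism $U(\xi): U(\s_{n-1})\xrightarrow{\ \sim\ } U(\ma_{n-1})$ (the universal enveloping algebra functor sends Lie algebra isomorphisms to algebra isomorphisms). Tensoring with the identity on $\cd_n$ then yields an algebra isomorphism
\[
\mathrm{id}_{\cd_n}\otimes U(\xi): \cd_n\otimes U(\s_{n-1}) \xrightarrow{\ \sim\ } \cd_n\otimes U(\ma_{n-1}).
\]
Finally, composing, the map $(\mathrm{id}_{\cd_n}\otimes U(\xi))^{-1}\circ \psi$ is the desired isomorphism $U(\mathfrak{s}_{n})_{X_{n}}\cong \cd_{n}\otimes U(\s_{n-1})$.

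There is no real obstacle here; the only point worth a line is the routine fact that $U(-)$ is functorial and carries isomorphisms to isomorphisms, and that tensoring a pair of algebra isomorphisms over $\C$ gives an algebra isomorphism — both are standard. One could, if desired, also write down the composite map explicitly on generators by transporting the formulas of Proposition \ref{s-iso} through $\xi$, sending $e_{ij}\in\s_{n-1}$ to $\tilde e_{ij}=e_{ij}-e_{nj}$ and $e_i$ to $\tilde e_i=\sum_{l=1}^n(e_{il}-e_{nl})$ inside $\ma_{n-1}$, but this is not needed for the statement. Hence the proof is essentially a one-sentence composition argument.
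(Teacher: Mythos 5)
Your proposal is correct and is exactly the paper's argument: the theorem is stated there as an immediate consequence of combining Lemma \ref{s-lemma} (which gives $U(\s_{n-1})\cong U(\ma_{n-1})$ via functoriality of $U(-)$) with Proposition \ref{s-iso}. No gaps; your extra remark on transporting the generator formulas through $\xi$ is optional, as you note.
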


\begin{remark} This isomorphism in Theorem \ref{tensor-iso} is motivated by the tensor field modules over the Witt algebra, see \cite{Sh}.
\end{remark}
\subsection{The Gelfand-Kirillov conjecture for $\mathfrak{s}_n$ and $\mathfrak{gl}_n$}

For any $r, s\in \Z_+$, define $\cd_{r,s}=\cd_r\otimes \mathbb{C}[y_1,\dots,y_s]$ which is a Noetherian domain. Denote its skew field of fractions by $\mathbf{F}_{r,s}$.
 If $s=0$, then we denote $\mathbf{F}_{r,0}$ by $\mathbf{F}_{r}$.

 In 1966, in the  famous paper \cite{GK},  Gelfand and Kirillov put forward the following conjecture.

\begin{conjecture}If  $\mg$ is the Lie algebra of a linear algebraic group over $\mathbb{C}$, then the  skew fraction field $\mathbf{F}(\mg)$ of $U(\mg)$ is isomorphic to $\mathbf{F}_{r,s}$ for some
$r,s\in \Z_+$ depending $\mg$.
\end{conjecture}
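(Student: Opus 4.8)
The Gelfand--Kirillov conjecture does not hold for an arbitrary algebraic Lie algebra (it is valid for some classes and fails for others; see \cite{P}), so the content relevant to this paper is its validity for the Lie algebras at hand, namely $\s_n$ and $\gl_n$, which is the purpose of Theorem \ref{h-th} and Theorem \ref{gk-th}. The plan is to prove both by induction on $n$, using Theorem \ref{tensor-iso} as the inductive step. The starting observation is that, since $X_n$ is an Ore subset of $U(\s_n)$, the localization $U(\s_n)_{X_n}$ is a Noetherian domain with the same skew field of fractions as $U(\s_n)$; hence, by Theorem \ref{tensor-iso}, $\mathbf{F}(\s_n)\cong\mathbf{F}\!\left(\cd_n\otimes U(\s_{n-1})\right)$.

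The technical heart is the (essentially standard) lemma: for an Ore domain $A$ with $\mathbf{F}(A)\cong\mathbf{F}_{r,s}$, the algebra $\cd_n\otimes A$ is a Noetherian domain with $\mathbf{F}\!\left(\cd_n\otimes A\right)\cong\mathbf{F}_{n+r,s}$. I would prove this in two steps. First, $A$ embeds in $\mathbf{F}(A)$, so $\cd_n\otimes A$ embeds in $\cd_n\otimes\mathbf{F}(A)$, which is the Laurent--Weyl algebra $\mathbf{F}(A)[x_1^{\pm1},\dots,x_n^{\pm1},\partial_1,\dots,\partial_n]$ over the division ring $\mathbf{F}(A)$ with $x_i,\partial_i$ centralizing $\mathbf{F}(A)$; being an iterated Ore extension and localization over a domain it is a Noetherian domain, and clearing $A$-denominators shows $\cd_n\otimes A$ and $\cd_n\otimes\mathbf{F}(A)$ have the same fraction field. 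Second, $\mathrm{Frac}\!\left(\cd_n\otimes\mathbf{F}(A)\right)$ is the rank-$n$ Weyl skew field over $\mathbf{F}(A)$; taking the model $\mathbf{F}_{r,s}=\mathbf{F}\!\left(\cd_r\otimes\C[y_1,\dots,y_s]\right)$ and using $\cd_{n+r}\cong\cd_n\otimes\cd_r$ together with associativity of $\otimes$, this field is identified with $\mathbf{F}\!\left(\cd_{n+r}\otimes\C[y_1,\dots,y_s]\right)=\mathbf{F}_{n+r,s}$.

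Granting the lemma, the case of $\s_n$ follows by induction. For the base case $n=1$, localizing $U(\s_1)=\C d_0\oplus\C d_1$ at $d_1$ and setting $x=d_1$, $\partial=d_1^{-1}d_0$, one checks $[\partial,x]=1$ and $d_0=x\partial$, so $U(\s_1)_{\{d_1^{\,i}\}}\cong\cd_1$ and $\mathbf{F}(\s_1)\cong\mathbf{F}_1$. Assuming $\mathbf{F}(\s_{n-1})\cong\mathbf{F}_{(n-1)n/2}$ and applying the lemma with $A=U(\s_{n-1})$ gives $\mathbf{F}(\s_n)\cong\mathbf{F}_{n+(n-1)n/2}=\mathbf{F}_{n(n+1)/2}$, the expected answer since $\dim\s_n=n(n+1)$ and $\s_n$ has trivial center (so $s=0$ throughout the chain).

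For $\gl_n$ one reduces, as indicated in the introduction, to the $\s_{n-1}$ case together with $Z\!\left(U(\gl_n)\right)$: the maximal parabolic $\mathfrak p_{n-1}\subset\gl_n$ decomposes as a Lie algebra into $\s_{n-1}\oplus\C I$, $\gl_n$ is $\mathfrak p_{n-1}$ enlarged by the $(n-1)$-dimensional opposite nilradical, and $Z\!\left(U(\gl_n)\right)$ is a polynomial algebra of rank $n$; one then shows $\mathbf{F}(\gl_n)$ is generated over $\mathbf{F}(\s_{n-1})$ by $n$ algebraically independent central elements, whence $\mathbf{F}(\gl_n)\cong\mathbf{F}_{(n-1)n/2}\bigl(z_1,\dots,z_n\bigr)\cong\mathbf{F}_{n(n-1)/2,\,n}$, consistent with $\dim\gl_n=n^2$. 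I expect the main obstacles to be the proof of the lemma --- where the noncommutative localization bookkeeping lives --- and, for $\gl_n$, the passage from ``generated by $\mathbf{F}(\s_{n-1})$ and $n$ central elements'' to a genuine purely transcendental central extension, i.e.\ verifying the algebraic independence of those central generators; the $\s_n$ part is essentially formal once Theorem \ref{tensor-iso} and the lemma are available.
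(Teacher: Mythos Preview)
The statement is the Gelfand--Kirillov \emph{conjecture}; the paper does not prove it (indeed it cites \cite{P} for counterexamples), so there is no ``paper's own proof'' to compare against. You correctly recognize this and pivot to what the paper does establish, namely Theorems~\ref{h-th} and~\ref{gk-th}, and your plan for those is essentially the paper's plan: induction on $n$ via Theorem~\ref{tensor-iso}, with the same base case $U(\s_1)_{d_1}\cong\cd_1$ via $d_1\mapsto x_1$, $d_1^{-1}d_0\mapsto\partial_1$.

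One difference worth noting: you isolate and justify a lemma $\mathbf{F}(\cd_n\otimes A)\cong\mathbf{F}_{n+r,s}$ when $\mathbf{F}(A)\cong\mathbf{F}_{r,s}$, whereas the paper simply writes $\mathbf{F}(\s_n)\cong\mathbf{F}_n\otimes\mathbf{F}(\s_{n-1})$ without comment (and that tensor product of skew fields is not a priori a skew field, so your formulation is the cleaner one). For $\gl_n$, the paper does not carry out the argument you sketch but instead invokes Dixmier \cite{Di} for the inclusion $U(\gl_n)\subset\mathbf{F}(\mathfrak p_{n-1})Z_n$; your outline is the content behind that citation, and the ``algebraic independence of the central generators'' issue you flag is exactly the point the paper outsources.
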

The conjecture was settled for nilpotent Lie algebras, $\sl_n$ and $\gl_n$
by Gelfand and Kirillov in the same paper  \cite{GK}.

In 1979, Nghi\^em Xu\^an Hai showed that the Gelfand-Kirillov conjecture is true for the semi-direct products $\sl(n), \mathfrak{sp}(2n)$ and $\mathfrak{so}(n)$ with their standard representation. Next using Theorem \ref{tensor-iso}, we give another proof for the Gelfand-Kirillov conjecture of $\s_n$.

\begin{theorem}\label{h-th}The skew fraction field $\mathbf{F}(\s_n)$ of $U(\s_n)$ is isomorphic to $\mathbf{F}_{\frac{n(n+1)}{2}}$,  for any $n\in \Z_{\geq 1}$.
\end{theorem}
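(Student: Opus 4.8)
The plan is to prove the statement by induction on $n$, repeatedly applying Theorem \ref{tensor-iso} to strip off one Weyl-algebra factor at each stage and keeping track of the total number of variables. Concretely, one shows that a suitable Ore localization of $U(\s_n)$ is isomorphic to $\cd_n\otimes\cd_{n-1}\otimes\cdots\otimes\cd_1\cong\cd_N$ with $N=\tfrac{n(n+1)}{2}$, and then observes that passing to an Ore localization does not change the skew fraction field.

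The base case is $n=1$, where $\s_1=\C d_0\oplus\C d_1$ with $[d_0,d_1]=d_1$. Inverting the locally $\mathrm{ad}$-nilpotent element $d_1$ and putting $x:=d_1$, $\partial:=d_0d_1^{-1}$, one checks in one line that $\partial x-x\partial=1$, and a comparison of PBW bases gives $U(\s_1)_{d_1}\cong\cd_1$; since $\cd_1$ is an Ore domain and $d_1\neq 0$, we get $\mathbf{F}(\s_1)=\mathbf{F}(U(\s_1)_{d_1})\cong\mathbf{F}_1$, which is $\mathbf{F}_{n(n+1)/2}$ for $n=1$.

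For the inductive step the key observation is that in $\cd_n\otimes U(\s_{n-1})$ the set $1\otimes X_{n-1}$ lies in the centralizer of the subalgebra $\cd_n\otimes 1$; since $X_{n-1}$ is an Ore subset of $U(\s_{n-1})$, it follows that $1\otimes X_{n-1}$ is an Ore subset of $\cd_n\otimes U(\s_{n-1})$ and that inverting it produces $\cd_n\otimes U(\s_{n-1})_{X_{n-1}}$. Applying Theorem \ref{tensor-iso} first to $\s_n$, then to $\s_{n-1}$, $\s_{n-2}$, and so on down to $\s_2$, and finally localizing the $U(\s_1)$-factor at $d_1$, we obtain a chain
$$U(\s_n)\hookrightarrow U(\s_n)_{X_n}\cong \cd_n\otimes U(\s_{n-1})\hookrightarrow\cdots\hookrightarrow \cd_n\otimes\cd_{n-1}\otimes\cdots\otimes\cd_2\otimes\cd_1$$
in which every arrow is either an algebra isomorphism or an Ore localization by an Ore set of nonzero elements. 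Using $\cd_r\otimes\cd_s\cong\cd_{r+s}$, the right-hand term is $\cd_N$ with $N=n+(n-1)+\cdots+1=\tfrac{n(n+1)}{2}$. Since each Ore localization $R\hookrightarrow R[S^{-1}]$, with $R$ an Ore domain and $S$ a set of nonzero elements, induces an isomorphism $\mathbf{F}(R)\cong\mathbf{F}(R[S^{-1}])$ (both being $R$ localized at all of its nonzero elements), composing along the chain yields $\mathbf{F}(\s_n)\cong\mathbf{F}(\cd_N)=\mathbf{F}_N=\mathbf{F}_{n(n+1)/2}$.

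The routine inputs — that the Weyl algebras and the $U(\s_k)$ are Noetherian domains, and that localization behaves well with respect to skew fraction fields, both via the Bernstein and PBW filtrations — are standard. I expect the one delicate point to be the tensor-product bookkeeping: verifying that $1\otimes X_{n-1}$ really is an Ore subset of $\cd_n\otimes U(\s_{n-1})$ with the asserted localization, so that the skew fraction field is genuinely carried unchanged along the whole chain of maps. This is straightforward once one uses that $\cd_n\otimes U(\s_{n-1})$ is free as a right module over $1\otimes U(\s_{n-1})$ on a $\C$-basis of $\cd_n$ and that $1\otimes X_{n-1}$ centralizes $\cd_n\otimes 1$, so the Ore conditions for $1\otimes X_{n-1}$ reduce to those already known for $X_{n-1}$ in $U(\s_{n-1})$.
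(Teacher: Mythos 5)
Your proof is correct and follows essentially the same route as the paper: induction via Theorem \ref{tensor-iso}, the base case $U(\s_1)_{d_1}\cong\cd_1$, and the fact that Ore localization leaves the skew fraction field unchanged. The only difference is presentational — you unroll the induction into an explicit chain and verify the Ore condition for $1\otimes X_{n-1}$ in the tensor product, a point the paper leaves implicit when it writes $\mathbf{F}(\s_n)\cong\mathbf{F}_n\otimes\mathbf{F}(\s_{n-1})$.
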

\begin{proof}  We proceed by induction on $n$. If $n=1$, then $\s_1$ is the two dimensional non-abelian Lie algebra. Recall that $\s_1=\C d_0\oplus \C d_1$ such that $[d_0,d_1]=d_1$. Let
$U(\s_1)_{d_1}$ denote the localization of $U(\s_1)$ respect to the subset $\{d_1^i\mid i\in \Z_+\}$. Then the map $$U(\s_1)_{d_1}\rightarrow \cd_1: d_1^{-1}d_0\mapsto \frac{\partial }{\partial x_1}, d_1\mapsto x_1$$ can give an isomorphism between $\mathbf{F}(\s_1)$ and the
 the  skew fraction field of $\cd_1$. For arbitrary positive integer $n$, by the isomorphism in Theorem \ref{tensor-iso},
$\mathbf{F}(\s_n)\cong \mathbf{F}_n\otimes \mathbf{F}(\s_{n-1})$. Then by the inductive hypothesis, $\mathbf{F}(\s_{n-1})\cong \mathbf{F}_{\frac{n(n-1)}{2}}$.
Consequently $\mathbf{F}(\s_n)\cong \mathbf{F}_{\frac{n(n+1)}{2}}$.
\end{proof}

\begin{corollary} For any $n\in \Z_{\geq 1}$, the center of $U(\s_n)$ is $\C$.
\end{corollary}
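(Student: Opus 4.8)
The plan is to bootstrap from Theorem \ref{h-th}. By PBW the associated graded of $U(\s_n)$ is the polynomial algebra $S(\s_n)$, so $U(\s_n)$ is a Noetherian domain and embeds into its skew fraction field $\mathbf{F}(\s_n)$. If $z\in Z(U(\s_n))$ then $zb=bz$ for every $b\in U(\s_n)$, hence $b^{-1}z=zb^{-1}$ in $\mathbf{F}(\s_n)$, and therefore $z$ commutes with every quotient $ab^{-1}$; thus $Z(U(\s_n))\subseteq Z(\mathbf{F}(\s_n))$. By Theorem \ref{h-th}, $Z(\mathbf{F}(\s_n))\cong Z(\mathbf{F}_{r})$ with $r=\frac{n(n+1)}{2}$, so it suffices to prove that the Weyl skew field $\mathbf{F}_r$ has center $\C$.

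To see this, recall that $\mathbf{F}_r$ is the skew fraction field of $\cd_r$ and contains the commutative subfield $K=\C(x_1,\dots,x_r)$, with $[\,\partial/\partial x_i\,,\,f\,]=\partial f/\partial x_i$ for all $f\in K$. Let $z\in Z(\mathbf{F}_r)$. Then $z$ centralizes $K$; since $K$ is a maximal subfield of $\mathbf{F}_r$ --- a classical fact about the Weyl skew field --- the subfield of $\mathbf{F}_r$ generated by $K$ and $z$ is commutative, hence equals $K$, i.e.\ $z\in K$. But $z$ also commutes with each $\partial/\partial x_i$, so $\partial z/\partial x_i=[\,\partial/\partial x_i\,,\,z\,]=0$ for all $i$, forcing $z\in\C$.

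Together these give $Z(U(\s_n))\subseteq\C$, and the reverse inclusion is clear, so $Z(U(\s_n))=\C$. The only point requiring care is the maximality of $K$ inside $\mathbf{F}_r$ (equivalently, that an element of $\mathbf{F}_r$ commuting with all $x_i$ already lies in $K$); this is standard and may simply be quoted as the statement that the Weyl field has center $\C$, so no real obstacle remains. Alternatively one may induct on $n$ using Theorem \ref{tensor-iso}: localization gives $Z(U(\s_n))\subseteq Z\big(U(\s_n)_{X_n}\big)\cong Z(\cd_n)\otimes Z(U(\s_{n-1}))=\C\otimes Z(U(\s_{n-1}))$, which reduces everything to $Z(\cd_n)=\C$ together with the elementary base case $Z(U(\s_1))=\C$.
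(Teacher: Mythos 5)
Your main argument is essentially the paper's own proof: the paper also deduces the statement from Theorem \ref{h-th} together with the fact that the Weyl skew field has center $\C$ (quoting that $Z(\cd_n)=\C$), and your write-up just makes explicit the two steps the paper compresses, namely $Z(U(\s_n))\subseteq Z(\mathbf{F}(\s_n))$ and $Z(\mathbf{F}_r)=\C$. Both your maximal-subfield argument and the quoted fact are fine, so there is no gap. Your closing alternative --- inducting via Theorem \ref{tensor-iso} and $Z(\cd_n\otimes U(\s_{n-1}))=Z(\cd_n)\otimes Z(U(\s_{n-1}))$ --- is actually a slightly more elementary route than the paper's, since it needs only the center of the Weyl algebra (not of its skew field) and the trivial base case $Z(U(\s_1))=\C$.
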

\begin{proof} By Proposition 5.12 in Chapter $1$ of \cite{D}, the center of $\cd_n$ is $\C$. Then the center of the Weyl field $\mathbf{F}_n$ is also $\C$. Hence so is the center of $U(\s_n)$ by Theorem \ref{h-th}.
\end{proof}

Using Theorem \ref{h-th} and the generators of the center of $U(\gl_n)$,
we give a proof of the Gelfand-Kirillov conjecture for $\gl_n$.

\begin{theorem}[Gelfand and Kirillov, 1966]\label{gk-th} For any $n\in\Z_{\geq 2}$, the skew fraction field $\mathbf{F}(\gl_n)$ of $U(\gl_n)$ is isomorphic to $\mathbf{F}_{\frac{n(n-1)}{2},n}$.
\end{theorem}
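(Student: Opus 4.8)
The plan is to reduce $\gl_n$ to $\s_{n-1}$: I would realize $\mathbf{F}(\gl_n)$ as $\mathbf{F}(\s_{n-1})$ with the $n$ central generators of $U(\gl_n)$ adjoined, and then invoke Theorem~\ref{h-th}. Let $\mathfrak{q}\subseteq\gl_n$ be the maximal parabolic subalgebra spanned by $\{e_{ij}\mid 1\le i\le n-1,\ 1\le j\le n\}$ together with $e_{nn}$, and put $\bar{\mathfrak{n}}=\mathrm{span}\{e_{n1},\dots,e_{n,n-1}\}$, so that $\gl_n=\mathfrak{q}\oplus\bar{\mathfrak{n}}$ as vector spaces. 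With $z:=\sum_{i=1}^{n}e_{ii}$ one checks that $z$ is central in $\mathfrak{q}$ and that $\mathfrak{q}=\s_{n-1}\oplus\C z$ as Lie algebras, where the copy of $\s_{n-1}=\gl_{n-1}\ltimes\C^{n-1}$ is spanned by $\{e_{ij}\mid i,j\le n-1\}\cup\{e_{in}\mid i\le n-1\}$. Hence $U(\mathfrak{q})\cong U(\s_{n-1})\otimes\C[z]$, and since $U(\mathfrak{q})\subseteq U(\gl_n)$ is an inclusion of Ore domains it extends to an embedding $\mathbf{F}(\mathfrak{q})=\mathbf{F}(\s_{n-1})(z)\hookrightarrow\mathbf{F}(\gl_n)$.

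The core step is to prove that $\mathbf{F}(\gl_n)$ equals the skew subfield $B$ generated by $\mathbf{F}(\mathfrak{q})$ and the center $Z:=\mathcal{Z}(U(\gl_n))=\C[z_1,\dots,z_n]$, where we may take $z_1=z$. Since $\gl_n=\mathfrak{q}\oplus\bar{\mathfrak{n}}$, it suffices to show $e_{n1},\dots,e_{n,n-1}\in B$. For this I would take the Capelli central element $\Gamma(t)\in Z[t]$, the column-determinant of $tI-E$ with the usual $\rho$-shift on the diagonal, where $E=(e_{ij})_{1\le i,j\le n}$; its coefficients generate $Z$. Expanding $\Gamma(t)$ along its $n$-th row, whose entries are $-e_{n1},\dots,-e_{n,n-1}$ and $t-e_{nn}$ with $e_{nn}\in\mathfrak{q}$, and noting that each cofactor of that row avoids row $n$ and hence lies in $U(\mathfrak{q})[t]$, one obtains $\Gamma(t)=(t-e_{nn})P(t)+\sum_{j=1}^{n-1}e_{nj}Q_j(t)$ with $P,Q_j\in U(\mathfrak{q})[t]$, $P$ monic of degree $n-1$ and $\deg_tQ_j\le n-2$. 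Therefore $R(t):=\Gamma(t)-(t-e_{nn})P(t)=\sum_{j=1}^{n-1}e_{nj}Q_j(t)$ lies in $B[t]$ and has degree $\le n-2$, so comparing coefficients of $1,t,\dots,t^{n-2}$ yields a system $(e_{n1},\dots,e_{n,n-1})\,W=b$, where $W\in M_{n-1}(U(\mathfrak{q}))$ has $(j,m)$-entry the coefficient of $t^m$ in $Q_j$ and $b\in B^{n-1}$. It then suffices to check that $W$ is invertible over $\mathbf{F}(\mathfrak{q})$, for then $(e_{n1},\dots,e_{n,n-1})=bW^{-1}\in B^{n-1}$. This can be verified in the associated graded ring $S(\gl_n)$: there $\mathrm{gr}(W)$ is the matrix formed from the cofactors of the commuting matrix $tI'-E'$, $E'=(e_{ij})_{i,j\le n-1}$, whose determinant is, up to sign, the Krylov determinant $\det\!\bigl[\,w\ \big|\ E'w\ \big|\ \cdots\ \big|\ (E')^{n-2}w\,\bigr]$ with $w=(e_{1n},\dots,e_{n-1,n})^{\mathsf T}$; this is nonzero in $S(\gl_n)$ because the cyclic-vector condition for a matrix of independent indeterminates is generic, and a standard filtration argument then lifts invertibility to $\mathbf{F}(\mathfrak{q})$. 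Hence $\mathbf{F}(\gl_n)=B$.

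It then remains to identify $B$. The elements $z_1,\dots,z_n$ are central in $\mathbf{F}(\gl_n)$ and algebraically independent over $\mathbf{F}(\s_{n-1})$, their principal symbols being algebraically independent over $S(\s_{n-1})$ in $S(\gl_n)$; hence $B$ is the skew fraction field of the Ore domain $\mathbf{F}(\s_{n-1})[z_1,\dots,z_n]$. By Theorem~\ref{h-th}, $\mathbf{F}(\s_{n-1})\cong\mathbf{F}_{\frac{n(n-1)}{2}}$, so $B$ is the skew fraction field of $\mathbf{F}_{\frac{n(n-1)}{2}}[z_1,\dots,z_n]$, which is $\mathbf{F}_{\frac{n(n-1)}{2},\,n}$ by definition. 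Therefore $\mathbf{F}(\gl_n)\cong\mathbf{F}_{\frac{n(n-1)}{2},\,n}$.

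I expect the second paragraph to be the main obstacle: turning the Laplace expansion and the coefficient comparison into honest identities in the noncommutative ring $U(\gl_n)$ — which is precisely what the $\rho$-shifted column-determinant (Capelli calculus) is for — and transporting both the genericity of the cyclic-vector condition and the algebraic independence of the central symbols from the commutative ring $S(\gl_n)$ up to $U(\gl_n)$ and then to the skew fields. The setup around $\mathfrak{q}$ and the final assembly using Theorem~\ref{h-th} are routine.
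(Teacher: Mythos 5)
Your argument is correct in outline and, modulo the technical points you yourself flag, can be completed; but it is worth noting how it relates to the paper's proof. The paper uses the same skeleton — identify the copy $\mathfrak{p}_{n-1}\cong\s_{n-1}$ spanned by $e_{ij}$, $1\le i\le n-1$, $1\le j\le n$, adjoin the center $Z_n$ of $U(\gl_n)$ (generated by the Gelfand invariants $c_1,\dots,c_n$, by Gelfand--Tsetlin), and then invoke Theorem \ref{h-th} — but it outsources the crucial containment $U(\gl_n)\subset \mathbf{F}(\mathfrak{p}_{n-1})Z_n$ to Dixmier's paper \cite{Di}. You instead prove that containment from scratch: expanding the Capelli determinant along its last row, solving the resulting linear system for $e_{n1},\dots,e_{n,n-1}$ over $\mathbf{F}(\mathfrak{q})$, and certifying invertibility of the coefficient matrix via its symbol in $S(\gl_n)$, where it becomes the Krylov determinant $\det[w\,|\,E'w\,|\,\cdots\,|\,(E')^{n-2}w]$, generically nonzero. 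This is essentially Dixmier's (and Gelfand--Kirillov's) own mechanism, so your route buys self-containedness at the price of the noncommutative bookkeeping you identify: the naive Laplace expansion of the column determinant is not an identity as written, since the row-$n$ factor sits inside each ordered monomial; moving it to the left produces commutator corrections, but because $[e_{kl},e_{nj}]=-\delta_{jk}e_{nl}$ for $k,l\le n-1$ these corrections still contain exactly one row-$n$ generator, so a decomposition $\Gamma(t)=(t-e_{nn})P(t)+\sum_j e_{nj}Q_j(t)$ with $P,Q_j\in U(\mathfrak{q})[t]$ does hold, with $Q_j$ equal to the classical cofactor only up to lower-filtration terms — which is all your symbol argument needs, since the leading components (constant filtration degree along each column of $W$) are exactly the commutative cofactor coefficients. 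You are also more careful than the paper on a point it leaves implicit: the algebraic independence of the $n$ central generators over $\mathbf{F}(\s_{n-1})$, needed to get exactly $\mathbf{F}_{\frac{n(n-1)}{2},n}$ rather than fewer polynomial variables; your symbol-level justification (the Jacobian/Krylov genericity with cyclic vector $\epsilon_n$) is the standard way to supply it.
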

\begin{proof} For every $k\in \{1,\dots,n\}$, let
$$c_{k } \ = \ \displaystyle {\sum_{(i_1,\ldots,i_k)\in \{
1,\ldots,n \}^k}} E_{i_1 i_2}E_{i_2 i_3}\ldots E_{i_k i_1}.$$
It is well known that the center $Z_n$ of $U(\gl_n)$
is generated by $c_{1},\dots,c_{n}$, see \cite{GT}.
By the result in \cite{Di}, $U(\gl_n)\subset \mathbf{F}(\mathfrak{p}_{n-1})Z_n$, where $\mathfrak{p}_{n-1}$ is the subalgebra of $\gl_n$ spanned by $e_{i,j},1\leq i\leq n-1, 1\leq j\leq n$. Note that $\mathfrak{p}_{n-1}\cong \s_{n-1}$.
Then by  Theorem \ref{h-th}, we have that $\mathbf{F}(\gl_n)\cong \mathbf{F}_{\frac{n(n-1)}{2},n}$.
\end{proof}
\begin{remark} Although Gelfand and Kirillov used the induction method in \cite{GK},
but they did not give the isomorphism in Theorem \ref{tensor-iso}.
In 2010, Premet showed that the Gelfand-Kirillov conjecture  does not hold for
simple Lie algebras of types $B_n, n \geq  3, D_n, n \geq  4, E_6, E_7, E_8$ and $ F_4$, see \cite{P}. The Gelfand-Kirillov conjecture remains open for simple Lie algebras of types $C_n, G_2$.
We expect that the tensor product decomposition method for $U(\s_n)$ may be useful to the study of $U(\mathfrak{sp}_{2n})$.
\end{remark}

\subsection{$\s_n$-modules}
Using the isomorphism in Theorem \ref{tensor-iso}, we can study weight  $\s_n$-modules.
For any $\lambda=(\lambda_1,\dots,\lambda_n)\in \C^n$, and  any $\s_{n-1}$-module $V$, the tensor product $x^{\lambda}A_n\otimes V$ can be defined to be an $\s_n$-module denoted by $T(\lambda,V)$ via the  isomorphism in Theorem \ref{tensor-iso}, where $x^{\lambda}=x_1^{\lambda_1}\cdots x_n^{\lambda_n}$.

%

\begin{lemma}\label{epic-lem}If $M=\oplus_{\alpha\in \Z^n} M_{\lambda+\alpha}$ is a  Harish-Chandra $\mathfrak{s}_{n}$-modules with each $e_i, 1\leq i\leq n$ acting on it bijectively, then $M\cong T(\lambda, V)$ for some $\lambda\in \C^n$, and some  $\s_{n-1}$-module $V$.
\end{lemma}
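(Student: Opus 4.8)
The plan is to transport $M$ across the isomorphism of Theorem~\ref{tensor-iso} and to recognize it there as an external tensor product. Since every $e_i$ acts bijectively on $M$, the Ore set $X_n$ acts invertibly, so $M$ extends uniquely to a $U(\s_n)_{X_n}$-module; via the isomorphism $\psi$ of Proposition~\ref{s-iso} it becomes a module over $\cd_n\otimes U(\ma_{n-1})$. As $\psi(e_{ii})=x_i\frac{\partial}{\partial x_i}\otimes 1$ lies in $\cd_n\otimes 1$, the $\mh_n$-weight decomposition of $M$ is exactly the joint eigenspace decomposition for the commuting operators $x_i\frac{\partial}{\partial x_i}\otimes 1$; write $M=\bigoplus_{\alpha\in\Z^n}M_{\lambda+\alpha}$ accordingly, with each $x_i\otimes 1$ acting bijectively.

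First I would describe $M$ purely as a module over $\cd_n\otimes 1\cong\cd_n$. On $M_{\lambda+\alpha}$ the operator $x_i\frac{\partial}{\partial x_i}$ acts by the scalar $\lambda_i+\alpha_i$, so $\frac{\partial}{\partial x_i}$ is forced to act on $M_{\lambda+\alpha}$ as $(\lambda_i+\alpha_i)$ times the already invertible operator $x_i^{-1}$. Hence the entire $\cd_n$-action on $M$ is determined by the $A_n$-module structure together with the weight grading: $M$ is a $\Z^n$-graded $A_n$-module on which every homogeneous unit acts invertibly between graded pieces, so the bijections $M_\lambda\xrightarrow{\ \sim\ }M_{\lambda+\beta}$, $v\mapsto x^\beta v$, assemble into a $\cd_n$-module isomorphism
\[
\Phi\colon M\;\xrightarrow{\ \sim\ }\;x^\lambda A_n\otimes M_\lambda,\qquad \Phi(x^\beta v)=x^{\lambda+\beta}\otimes v\ \ (v\in M_\lambda),
\]
which restricts to the identity identification $M_\lambda=\C x^\lambda\otimes M_\lambda$.

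Next I would install an $\s_{n-1}$-module structure on $V:=M_\lambda$ and check full equivariance of $\Phi$. Because $1\otimes U(\ma_{n-1})$ commutes with $\cd_n\otimes 1$, in particular with $\psi(\mh_n)$, each operator $1\otimes u$ ($u\in\ma_{n-1}$) commutes with the $\mh_n$-action and thus preserves every weight space, in particular $M_\lambda$; this makes $V=M_\lambda$ an $\ma_{n-1}$-module, equivalently an $\s_{n-1}$-module via Lemma~\ref{s-lemma}. Now $\Phi$ is $\cd_n$-linear by construction, so for $u\in\ma_{n-1}$ the maps $\Phi\circ(1\otimes u)\circ\Phi^{-1}$ and $1\otimes(u|_{M_\lambda})$ are both $\cd_n$-endomorphisms of $x^\lambda A_n\otimes M_\lambda$; they agree on the weight-$\lambda$ subspace $\C x^\lambda\otimes M_\lambda$ (immediate from $\Phi|_{M_\lambda}=\mathrm{id}$), and since $x^\lambda$ generates $x^\lambda A_n$ over $\cd_n$, that subspace generates $x^\lambda A_n\otimes M_\lambda$ over $\cd_n$, so the two endomorphisms coincide. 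Hence $\Phi$ is $(\cd_n\otimes U(\ma_{n-1}))$-linear, that is, $M\cong x^\lambda A_n\otimes V=T(\lambda,V)$; and $V=M_\lambda$ is finite dimensional since $M$ is Harish-Chandra.

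The step requiring the most care is the rigidity used in the second paragraph: a $\cd_n$-weight module on which all $x_i$ act bijectively and whose support lies in a single $\Z^n$-coset is necessarily of the coinduced form $x^\lambda A_n\otimes(\text{weight space})$. Its proof is short but is the real content — invertibility of the $x_i$ together with the weight grading leaves no freedom in the action of the derivations $\frac{\partial}{\partial x_i}$, which is exactly what rules out nontrivial self-extensions among such modules. Everything else is bookkeeping through $\psi$.
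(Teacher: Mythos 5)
Your proposal is correct and follows essentially the same route as the paper: transport $M$ through the isomorphism of Theorem~\ref{tensor-iso}, set $V=M_\lambda$ (an $\s_{n-1}$-module because $1\otimes U(\ma_{n-1})$ commutes with $\cd_n\otimes 1$), and observe that bijectivity of the $x_i$ plus the weight grading forces $\frac{\partial}{\partial x_i}$ to act as $(\lambda_i+\alpha_i)x_i^{-1}$, identifying $M$ with $x^\lambda A_n\otimes V=T(\lambda,V)$. The only difference is that you spell out the $U(\ma_{n-1})$-equivariance of the identification (agreement on the generating piece $\C x^\lambda\otimes M_\lambda$), which the paper leaves implicit.
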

\begin{proof} Using the isomorphism in Theorem \ref{tensor-iso}, we view  $M$  as
a  module over  $\cd_n\otimes U(\s_{n-1})$. Moreover $M$ is a weight $\cd_n$-module with respect to the action of $\{ x_1\frac{\partial }{\partial x_1},\dots,x_n\frac{\partial }{\partial x_n}\}$. Set $V=M_\lambda$ which is a $U(\s_{n-1})$-modules, since $[\cd_n, U(\s_{n-1})]=0$.  As a vector space $$M=\oplus_{\alpha\in \Z^n} x^\alpha\otimes V= x^\lambda A_n \otimes  V.$$
For any $i\in\{1,\dots, n\}$, we have $$ \frac{\partial }{\partial x_i}\cdot x^\alpha\otimes v=x_i^{-1}\cdot (x_i \frac{\partial }{\partial x_i})\cdot x^\alpha\otimes v=(\lambda_i+\alpha_i) x^{\alpha-\epsilon_i}\otimes v.$$ So as a  $\cd_n\otimes U(\s_{n-1})$-module,
$M\cong  x^\lambda A_n \otimes  V$. Consequently,  $M \cong T(\lambda, V)$ as an $\s_n$-module.
\end{proof}

\begin{proposition}\label{mod-p} For a fixed $\lambda\in \C^n$, let $\mathcal{C}_{\lambda,n}$
be the category of Harish-Chandra $U(\mathfrak{s}_{n})_{X_n}$-modules $M$ such that $\text{Supp}(M)=\lambda+\Z^n$. Then $\mathcal{C}_{\lambda,n}$ is
equivalent to the category $\mathcal{A}_{n-1}$ of finite dimensional $\s_{n-1}$-modules, where $n\in \Z_{\geq 2}$.
\end{proposition}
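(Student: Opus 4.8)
The plan is to unfold the definition of $\mathcal{C}_{\lambda,n}$ through Theorem \ref{tensor-iso} and to exhibit an explicit pair of quasi-inverse functors. Since $e_1,\dots,e_n$ are invertible in $U(\s_n)_{X_n}$, every object $M$ of $\mathcal{C}_{\lambda,n}$ automatically has each $e_i$ acting bijectively; together with the Harish-Chandra hypothesis and $\mathrm{Supp}(M)=\lambda+\Z^n$ this places $M$ in the situation of Lemma \ref{epic-lem}, so $M\cong T(\lambda,M_\lambda)$ as $\s_n$-modules, where $M_\lambda$ is made into an $\s_{n-1}$-module through the factor $1\otimes U(\s_{n-1})\subseteq\cd_n\otimes U(\s_{n-1})$. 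This dictates the two functors: $F\colon\mathcal{C}_{\lambda,n}\to\mathcal{A}_{n-1}$, $M\mapsto M_\lambda$, sending a morphism $f$ to its restriction $f|_{M_\lambda}$ (an $\s_{n-1}$-module map, since $U(\s_{n-1})$ preserves each weight space); and $G\colon\mathcal{A}_{n-1}\to\mathcal{C}_{\lambda,n}$, $V\mapsto T(\lambda,V)$, sending $g\colon V\to V'$ to $\mathrm{id}_{A_n}\otimes g$.

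First I would check that the two functors land where claimed. Under the isomorphism of Theorem \ref{tensor-iso} (see Proposition \ref{s-iso}) each $e_{ii}$ maps to the Euler operator $x_i\tfrac{\partial}{\partial x_i}\otimes 1$, so on $T(\lambda,V)=x^\lambda A_n\otimes V$ the Cartan $\mh_n$ acts only on the first tensor factor; hence $x^{\lambda+\alpha}\otimes V$ is exactly the weight space of weight $\lambda+\alpha$, one has $T(\lambda,V)_{\lambda+\alpha}\cong V$ for all $\alpha\in\Z^n$, and therefore $T(\lambda,V)$ is a weight module with support $\lambda+\Z^n$ (for $V\neq 0$) which is Harish-Chandra precisely when $\dim_\C V<\infty$. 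Conversely, if $M\in\mathcal{C}_{\lambda,n}$ then $M_\lambda$ is finite dimensional by the Harish-Chandra condition and is a genuine $\s_{n-1}$-module because $1\otimes U(\s_{n-1})$ commutes with $\cd_n$ and thus stabilizes the $\cd_n$-weight space $M_\lambda$; so $F(M)\in\mathcal{A}_{n-1}$. (To make both categories additive I would read the support condition as $\mathrm{Supp}(M)\subseteq\lambda+\Z^n$, so that the zero module is allowed and corresponds to $V=0$.)

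Next I would verify that $F$ and $G$ are mutually quasi-inverse. One composite is transparent: $F(G(V))=T(\lambda,V)_\lambda=x^\lambda\otimes V$, and $v\mapsto x^\lambda\otimes v$ is an isomorphism of $\s_{n-1}$-modules, natural in $V$. For the other composite, $G(F(M))=T(\lambda,M_\lambda)$, and the isomorphism furnished by Lemma \ref{epic-lem} is the canonical map $x^{\lambda+\alpha}\otimes v\mapsto e_1^{\alpha_1}\cdots e_n^{\alpha_n}\cdot v$ (with $\alpha\in\Z^n$, $v\in M_\lambda$, the $e_i^{\alpha_i}$ making sense because the $e_i$ are invertible), which is visibly natural in $M$. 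Compatibility with morphisms in both directions is immediate from the defining formulas for $F$ and $G$, and one concludes $FG\cong\mathrm{id}_{\mathcal{A}_{n-1}}$ and $GF\cong\mathrm{id}_{\mathcal{C}_{\lambda,n}}$, i.e.\ the asserted equivalence.

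Since the two substantive ingredients — Theorem \ref{tensor-iso} and Lemma \ref{epic-lem} — are already in hand, the remaining work is essentially bookkeeping. The only point requiring a little care is the naturality of the isomorphism of Lemma \ref{epic-lem} in $M$, which is why I would record it in the explicit ``multiply by $e_1^{\alpha_1}\cdots e_n^{\alpha_n}$'' form above rather than merely quoting the lemma; everything else — that $F$ and $G$ are well defined on morphisms, and the treatment of the degenerate module $M=0$ — is routine, and I do not anticipate a genuine obstacle.
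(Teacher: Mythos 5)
Your proposal is correct and follows essentially the same route as the paper: both treat objects of $\mathcal{C}_{\lambda,n}$ as $\cd_n\otimes U(\s_{n-1})$-modules via Theorem \ref{tensor-iso} and use Lemma \ref{epic-lem} to see that every object is of the form $T(\lambda,V)$. The only cosmetic difference is that you exhibit the explicit quasi-inverse $M\mapsto M_\lambda$ and check naturality directly, whereas the paper verifies that $T(\lambda,-)$ is fully faithful (via Schur's Lemma, $\mathrm{End}_{\cd_n}(x^\lambda A_n)=\C$) and essentially surjective.
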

\begin{proof}We will show that the functor
$$\aligned
T(\lambda, -):\   \mathcal{A}_{n-1}&\rightarrow \mathcal{C}_{\lambda,n},\ \
V&\mapsto T(\lambda,V),
\endaligned$$
 is an equivalence of the two categories.
By Schur's Lemma,  we have $\text{End}_{\cd_n}(x^\lambda A_n)=\C$.
So for any $V,W\in \mathcal{A}_{n-1}$, $$ \text{Hom}_{\cd_n\otimes U(\s_{n-1})}(T(\lambda,V),T(\lambda,W))
\cong \text{Hom}_{\s_{n-1}}(V,W).$$ So the functor  $T(\lambda, -)$ is fully faithful. By Lemma \ref{epic-lem},  $T(\lambda, -)$ is an equivalence.
 \end{proof}

Let $\C\langle x, y\rangle$ be the free associative algebra over $\C$ generated by two variables $x, y$.  Recall that an abelian category $\mathcal{C}$ is wild if there exists an exact functor from the category of finite dimensional $\C\langle x, y\rangle$-modules to $\mathcal{C}$, preserving indecomposability and mapping non-isomorphic modules to
 non-isomorphic objects.

\begin{corollary}For any $n\in\Z_{\geq 2}$, $\lambda\in \C^n$, the category $\mathcal{C}_{\lambda,n}$ is wild.
\end{corollary}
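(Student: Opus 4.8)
The plan is to transport the statement, via the equivalence of Proposition \ref{mod-p}, to finite-dimensional modules over $U(\s_{n-1})$, and then to realise a wild matrix problem inside that category. Proposition \ref{mod-p} gives an equivalence $T(\lambda,-)\colon\mathcal{A}_{n-1}\xrightarrow{\ \sim\ }\mathcal{C}_{\lambda,n}$, and wildness passes along equivalences: if $F$ is an exact functor from the category of finite-dimensional $\C\langle x,y\rangle$-modules to $\mathcal{A}_{n-1}$ preserving indecomposability and reflecting isomorphism, then $T(\lambda,-)\circ F$ witnesses that $\mathcal{C}_{\lambda,n}$ is wild. Hence it suffices to show that the category $\mathcal{A}_m$ of finite-dimensional $\s_m$-modules is wild for every $m\in\Z_{\geq 1}$, applied with $m=n-1$.

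First I would settle the case $m=1$. Here $\s_1=\C d_0\oplus\C d_1$ with $[d_0,d_1]=d_1$, and on a finite-dimensional module $d_1$ raises the generalised $d_0$-eigenvalue by $1$. Restricting to modules whose $d_0$-eigenvalues lie in a fixed pair $\{\alpha,\alpha+1\}$, one checks that such an $\s_1$-module is exactly a pair of finite-dimensional spaces $M_0,M_1$ equipped with the (automatically nilpotent) endomorphisms $A_0,A_1$ coming from the nilpotent part of $d_0$, together with a linear map $f\colon M_0\to M_1$ with $A_1f=fA_0$, and that $\s_1$-module homomorphisms between two such are precisely morphisms of these data. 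Thus this full, extension-closed subcategory of $\mathcal{A}_1$ is the category of those finite-dimensional modules over the triangular matrix ring $\mathrm{T}_2(\C[t])$ on which $t$ acts nilpotently, i.e.\ $\varinjlim_N\mathrm{T}_2(\C[t]/(t^N))\text{-mod}$. The latter is of wild representation type — for instance it contains the wild submodule categories $\mathcal{S}(\C[t]/(t^N))$ for $N$ large, studied by Ringel and Schmidmeier — and composing the associated wildness functor with the embedding $(M_0,M_1,f)\mapsto(M_0\oplus M_1,\ d_0=\mathrm{diag}(\alpha+A_0,\,\alpha+1+A_1),\ d_1=f)$ produces the required $F$ for $m=1$.

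For $m\geq 2$ I would run the same construction inside $U(\s_m)$: use $\mathfrak h_m$-weight decompositions, concentrate all the data on two adjacent weights, and let the non-semisimple part of the $\mathfrak h_m$-action play the role of the unbounded nilpotent operators $A_i$; the verification that one again lands in a wild subcategory parallels the $m=1$ case. Inflation along a Lie-algebra quotient is not available here, since every proper nonzero quotient of $\s_m$ is $\gl_m$, $\mathfrak{pgl}_m$ or $\C$, none of which has wild finite-dimensional representation type; so the wildness is intrinsic to $\s_m$ and must be produced by hand.

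The easy parts are the reduction through Proposition \ref{mod-p} and the bookkeeping that $F$ is exact, preserves indecomposability and reflects isomorphisms — these hold because $F$ merely reorganises the underlying vector spaces and visibly identifies the relevant endomorphism rings. The genuine obstacle is the representation-type input: isolating a finite-dimensional quotient of (a suitable full subcategory of) $U(\s_m)\text{-}\mathrm{mod}$ that is provably wild, i.e.\ establishing the wildness of $\mathrm{T}_2(\C[t]/(t^N))$ for an appropriate $N$, or, more directly, constructing by hand a strict full embedding of the category of finite-dimensional $\C\langle x,y\rangle$-modules into $\mathcal{A}_m$. That is where essentially all the content lies.
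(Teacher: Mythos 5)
Your reduction step coincides with the paper's: wildness transfers across the equivalence $T(\lambda,-)$ of Proposition \ref{mod-p}, so the whole statement hinges on the wildness of the category $\mathcal{A}_{n-1}$ of finite-dimensional $\s_{n-1}$-modules. The paper settles that hinge in one line by invoking Makedonskii's classification of tame and wild finite-dimensional Lie algebras \cite{Mak}, which in particular gives that $\mathcal{A}_{m}$ is wild for every $m\geq 1$ -- including the case $m=1$ (the two-dimensional non-abelian algebra), which is exactly what is needed when $n=2$.

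Your proposal replaces that citation by an attempted direct construction, and this is where the genuine gap lies -- as you yourself concede, ``that is where essentially all the content lies.'' For $m=1$ you correctly translate the two-adjacent-weight subcategory of $\mathcal{A}_1$ into a morphism category of nilpotent $\C[t]$-modules, but the wildness of that category is asserted, not proved: you appeal loosely to Ringel--Schmidmeier submodule categories $\mathcal{S}(\C[t]/(t^N))$, without fixing $N$, without a precise statement, and without checking that the resulting composite functor from finite-dimensional $\C\langle x,y\rangle$-modules is exact into the abelian category $\mathcal{A}_1$, preserves indecomposability and reflects isomorphism in the sense of the paper's definition of wildness. For $m\geq 2$ the situation is worse: since, as you note, $\s_1$ is not a quotient of $\s_m$, no inflation shortcut exists, and your plan to ``run the same construction'' concentrated on two adjacent $\mathfrak{h}_m$-weights is only sketched, with the key verifications left to an asserted analogy. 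So what you have is the correct (and same-as-the-paper) reduction via Proposition \ref{mod-p}, together with an incomplete substitute for the representation-type input; to close the argument, either carry out in detail a strict exact embedding of finite-dimensional $\C\langle x,y\rangle$-modules into $\mathcal{A}_{n-1}$, or simply cite \cite{Mak} as the paper does.
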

\begin{proof} By the results in \cite{Mak}, the category $\mathcal{A}_{n-1}$ is wild. So does $\mathcal{C}_{\lambda,n}$.
\end{proof}

\begin{remark} In \cite{Di}, it was shown that there is an element $d\in U(\gl_n)$
such that the localization of $U(\gl_n)_d$ is isomorphic to
$U(\s_{n-1})_d\otimes Z(U(\gl_n))$. So the study of $\s_{n-1}$-modules may be
meaningful for constructing $U(\gl_n)$-modules.
\end{remark}

\vspace{2mm}
\noindent
{\bf Acknowledgments. }This research is supported  by NSF of China (Grant
11771122) and NSF of Henan Province (Grant 202300410046).

\vspace{0.2cm}

\noindent Y. Li: School of Mathematics and Statistics, Henan University, Kaifeng
475004, China. Email: 897981524@qq.com

\vspace{0.2cm}
\noindent G. Liu: School
of Mathematics and Statistics, Henan University, Kaifeng 475004, P.R. China. Email:
liugenqiang@henu.edu.cn
\vspace{0.2cm}

\end{document}